\documentclass[11pt]{article}

\usepackage[margin=1in]{geometry}
\usepackage{amsmath,amssymb,amsthm}
\usepackage{mathtools}
\usepackage[english]{babel}
\usepackage{algorithm}
\usepackage{algpseudocode}
\usepackage{cite}
\usepackage{booktabs}
\usepackage{pdflscape}
\usepackage{longtable}
\usepackage{graphicx}
\usepackage[bookmarks,bookmarksnumbered]{hyperref}
\hypersetup{colorlinks = true,linkcolor = blue,anchorcolor =red,citecolor = blue,filecolor = red,urlcolor = red,
            pdfauthor=author}

\newtheorem{theorem}{Theorem}[section]
\newtheorem{lemma}[theorem]{Lemma}
\newtheorem{proposition}[theorem]{Proposition}

\theoremstyle{definition}
\newtheorem{assumption}{Assumption}

\theoremstyle{remark}
\newtheorem{remark}{Remark}

\newcommand{\R}{\mathbb{R}}
\newcommand{\norm}[1]{\left\lVert #1 \right\rVert}

\title{A Wolfe-Type Spectral Conjugate Gradient Method for Nonsmooth Convex Optimization Problems}
\author{Jauny\textsuperscript{a} and Gourav Kumar \textsuperscript{b} \\ \textsuperscript{a}Department of Mathematics, SRM Institute of Science and Techology, \\Kattankulathur, Chengalpattu District, 603203, Tamil Nadu, India\\ \texttt{jaunys@srmist.edu.in}\\\textsuperscript{b}Department of Mathematics and Computing, National Institute of Technology\\ Kurukshetra 136119, Haryana, India}
\date{}

\begin{document}
\maketitle

\begin{abstract}
This paper proposes a Wolfe-type spectral conjugate gradient method for nonsmooth convex optimization, built on the Moreau-Yosida regularization of the objective function. The method combines a safeguarded spectral parameter with a Dai-Kou-type conjugate parameter, and uses a Wolfe-type line search compatible with the inexact gradients that the regularization produces. We establish global convergence of the method, together with an R-linear convergence rate under an additional strong-convexity assumption. The method is evaluated on standard nonsmooth optimization benchmarks and on large-scale problems, and compared against several existing conjugate gradient and bundle-type methods. The results show that the proposed method performs competitively overall, matching or outperforming existing methods on most problems tested, while a few specific limitations of the current implementation are also identified and discussed.
\end{abstract}

\noindent \textbf{Keywords:} spectral conjugate gradient, nonsmooth convex optimization, Moreau--Yosida regularization, Wolfe-type line search, global convergence

\section{Introduction}

Conjugate gradient (CG) methods are among the most efficient methods for large-scale smooth unconstrained optimization problems, owing to their iteration simplicity, low memory requirements, and strong convergence theory. CG methods address the unconstrained optimization problem
\begin{equation}\label{org_prob}
\min_{x \in \R^n} f(x), 
\end{equation}
where $f : \R^n \to \R$ is smooth. The iteration formula of the classical nonlinear CG method is
\begin{equation}\label{iter_seq_smooth}
x_{k+1} = x_k + \alpha_k d_k, 
\end{equation}
where $\alpha_k > 0$ is the step length and $d_k$ is the search direction which is defined as follows:
\begin{equation}\label{dir_formula_smooth}
d_k =
\begin{cases}
-g_0, & k = 0,\\
-g_k + \beta_k d_{k-1}, & k \ge 1,
\end{cases} 
\end{equation}
with $g_k = \nabla f(x_k)$ and $\beta_k$ the conjugate parameter. Numerous choices of $\beta_k$ have been proposed, giving rise to well-known CG variants such as Fletcher-Reeves (FR) \cite{fletcherreeves1964}, Hestenes-Stiefel (HS) \cite{hestenesstiefel1952}, Polak-Ribi\`ere-Polyak (PRP) \cite{polakribiere1969,polyak1969}, Dai-Yuan (DY) \cite{daiyuan1999}, Liu-Storey (LS) \cite{liustorey1991}, and others. A parallel line of work, initiated by Barzilai and Borwein (BB) \cite{barzilaiborwein1988}, replaces the unit coefficient of $g_k$ with a spectral parameter $\theta_k$ obtained from a two-point secant approximation of the Hessian (or its inverse):
\begin{equation}\label{thets_k_smooth}
\theta_k = \frac{s_{k-1}^\top s_{k-1}}{s_{k-1}^\top y_{k-1}} \quad \text{or} \quad \theta_k = \frac{s_{k-1}^\top y_{k-1}}{y_{k-1}^\top y_{k-1}},
\end{equation}
where $s_{k-1} = x_k - x_{k-1}$ and $y_{k-1} = g_k - g_{k-1}$.

Building on the BB spectral gradient method and the nonlinear CG framework, Birgin and Mart\'inez (2001) \cite{birginmartinez2001} proposed the spectral conjugate gradient (SCG) method, in which the search direction is updated as
\begin{equation}\label{SCG_dir_smooth}
d_{k+1} =
\begin{cases}
-g_0, & k = 0,\\
-\theta_k g_{k+1} + \beta_k d_k, & k \ge 1.
\end{cases} 
\end{equation}
By combining spectral scaling of the gradient with the conjugate memory term, SCG inherits the advantages of both approaches. This hybrid framework serves as the foundation of the method proposed in this paper. The most effective modern spectral CG variants for smooth optimization combine three key components: (i) a bounded spectral parameter which is consistent with quasi-Newton secant approximations, (ii) a conjugate parameter guaranteeing the sufficient descent property $g_k^\top d_k \le -c\norm{g_k}^2$ (independent of the line search), and (iii) a Wolfe-type line search:
\begin{align}
f(x_k + \alpha_k d_k) &\le f(x_k) + \delta \alpha_k g_k^\top d_k, \label{wolfe_con_smooth1}\\
g(x_k + \alpha_k d_k)^\top d_k &\ge \sigma g_k^\top d_k, \label{wolfe_con_smooth2}
\end{align}
with $0 < \delta < \sigma < 1$,  which provides the curvature condition required to establish global convergence.

Despite the remarkable progress achieved for smooth optimization, the nonsmooth unconstrained setting poses fundamentally different challenges. Specifically, we consider the following optimization problem: 
\begin{equation}\label{nonsmooth_prob}
\min_{x \in \R^n} f(x), 
\end{equation}
where $f : \R^n \to \R$ is convex, locally Lipschitz continuous, and not necessarily differentiable. A prominent approach for adapting CG methods to \eqref{nonsmooth_prob} is to work with the Moreau-Yosida regularization (MYR) of $f$, which replaces \eqref{nonsmooth_prob} by the equivalent smooth problem:
\begin{equation}\label{MYR_min_problem}
\min_{x \in \R^n} F(x), 
\end{equation}
where
\begin{equation}\label{F_x}
F(x) = \min_{z \in \R^n} \left\{ f(z) + \frac{1}{2\lambda}\norm{z-x}^2 \right\}, \qquad \lambda > 0.
\end{equation}
The function $F$ is convex and differentiable, with
\begin{equation}\label{grad_F}
g(x) =\nabla F(x) = \frac{x - p(x)}{\lambda}, 
\end{equation}
where
\begin{equation}\label{exact_p}
p(x) = \arg\min_{z \in \R^n} \left\{ f(z) + \frac{1}{2\lambda}\norm{z-x}^2 \right\}, 
\end{equation}
which is generally unavailable in closed form. Practical algorithms therefore work with an $\varepsilon$-inexact gradient $g^a(x,\varepsilon)$ satisfying $\norm{g^a(x,\varepsilon) - \nabla F(x)} \le \sqrt{2\varepsilon/\lambda}$, with $\varepsilon_k \downarrow 0$ along the iterations (see \cite{correa1993}).

The variant of CG methods to solve \eqref{nonsmooth_prob} via problem \eqref{MYR_min_problem} share two limitations that motivate the present work: none uses a spectral parameter within the search direction, and all rely on an Armijo-type line search condition. Yuan, Wei, and Li \cite{yuan2014} proposed a three-term Polak-Ribi\`ere-Polyak-type method using a trust-region-style denominator for sufficient descent, with a nonmonotone Armijo line search. Woldu, Zhang, Zhang, and Fissuh \cite{woldu2020} used a nonnegative hybrid Dai-Yuan/Hestenes-Stiefel parameter under a monotone Armijo line search. Abdollahi and Fatemi \cite{abdollahi2021} constructed the conjugate parameter via a small auxiliary optimization problem, again with a monotone Armijo line search. Earlier, Yuan and Wei \cite{yuan2012} introduced BB-NS, a purely spectral gradient algorithm in which the BB step sets only the initial trial step length for a nonmonotone Armijo line search, rather than persisting as a spectral scaling within the direction itself. All four require only convexity of $f$ and $\varepsilon_k\to0$.

One method that combines spectral scaling, a conjugate memory term, and a Wolfe-type line search for a nonsmooth objective is the Spectral Conjugate Subgradient (SCS) method of Loreto, Humphries, Raghavan, Wu, and Kwak \cite{loreto2025}, which works directly with Clarke subgradients without MYR smoothing. Its descent property rests on a post hoc angle test that discards the conjugate term and restarts as pure spectral descent whenever the test fails, and its convergence proof covers only the case $\beta=0$, under a boundedness hypothesis the authors themselves note ``may not hold'' once the conjugate term is active. A different route, the semismooth conjugate gradient method of Bethke, Griewank, and Walther \cite{bethke2024}, exploits semismoothness of $f$ directly (without MYR step, spectral parameter, or Wolfe search) and proves convergence to Clarke stationary points for nonconvex nonsmooth $f$, at higher per-iteration cost.

This leaves a precise gap: no existing method combines a spectral parameter, a conjugate memory term, and a curvature-controlling Wolfe-type line search under a proved global convergence guarantee, and SCS -- the only method to attempt this combination -- falls short of such a proof. The central question of this paper is whether the descent and line-search machinery that makes the strongest smooth-case spectral CG methods \cite{faramarzi2019,wang2015} rigorous can be carried over to the nonsmooth setting without sacrificing the spectral term or the two-sided line search. We answer affirmatively by working on the Moreau-Yosida smoothed surrogate $F$ rather than directly on the discontinuous subgradient field, which lets us adapt the sufficient-descent and curvature-control arguments of the smooth theory instead of relying on a heuristic restart.

The contributions of this paper are:
\begin{enumerate}
\item[(i)] A new spectral parameter $\theta_k$, quasi-Newton-consistent and safeguarded, and a conjugate parameter $\beta_k$ of Dai--Kou type, whose combination in direction \eqref{dir_nonsmooth_scg} is proved (Lemma~\ref{lem:descent} below) to satisfy sufficient descent by construction, with no heuristic restart, in contrast to SCS \cite{loreto2025}.
\item[(ii)] An R-linear convergence rate (Theorem~\ref{thm:linear} below) for the function values and iterates when $f$ is, in addition, strongly convex, obtained by combining the sufficient-descent and step-length machinery above with the fact that the MYR of a strongly convex function is itself strongly convex.
\item[(iii)]  We conduct an extensive numerical evaluation in Section 4. We test the method on the standard nonsmooth test set of Luk\v{s}an and Vl\v{c}ek \cite{luksanvlcek2000}, on which it converges to the known optimum on 22 of 23 implementable problems (Section 4.1). We compare it, under controlled, same-implementation conditions, against a reproduction of the Moreau-Yosida-based NCG method of Abdollahi and Fatemi \cite{abdollahi2021}, together with further literature comparisons where reproduction was not possible (Section 4.2). We further test the method at scale, up to $100{,}000$ variables, on the scalable problems of Tang, Ouyang, Pham, and Yuan \cite{tang2025}, including a controlled comparison against three existing large-scale methods (Section 4.3). We report favorable and unfavorable outcomes alike, including specific cases where our method is outperformed and the mechanism responsible, and a discrepancy we found between a prior paper's published results and the true optimal values on several of its own test problems.
\end{enumerate}

The rest of the paper is organized as follows. Section \ref{section_prelim} recalls the properties of the Moreau-Yosida regularization and its inexact surrogate used throughout. Section \ref{sec_method} develops the proposed method itself and establishes its sufficient-descent property, global convergence, and R-linear rate under strong convexity. Section 4 reports the numerical evaluation, from the standard Luk\v{s}an--Vl\v{c}ek test set and a controlled comparison against existing methods through to a large-scale study, up to $100{,}000$ variables, against three further existing large-scale methods.

\section{Preliminaries} \label{section_prelim}

Some properties of $F$ used below can be found in \cite{baillonhaddad1977,correa1993,hiriart1993,calamai1987,qi1993}. We collect the ones needed here.

\begin{enumerate}
\item[(i)] $F$ is finite-valued, convex, and differentiable. Its gradient mapping $g: \R^n \to \R^n$, $g(x) = \nabla F(x)$, is globally Lipschitz continuous with constant $1/\lambda$:
\begin{equation}\label{g_lips_cond}
\norm{g(x) - g(y)} \le \frac{1}{\lambda}\norm{x-y}, \qquad \forall x,y \in \R^n. 
\end{equation}
\item[(ii)] $x$ solves \eqref{nonsmooth_prob} $\iff$ $x$ solves \eqref{MYR_min_problem} $\iff$ $g(x) = 0$ $\iff$ $p(x) = x$.
\item[(iii)] Since $F$ is convex and differentiable with $g$ globally $\tfrac{1}{\lambda}$-Lipschitz continuous (see \eqref{g_lips_cond}), then
\begin{equation}\label{g_corcive}
(g(x)-g(y))^\top(x-y) \ge \lambda\|g(x)-g(y)\|^2, \qquad \forall\, x,y\in\mathbb{R}^n. 
\end{equation}
\end{enumerate}

Once $p(x)$ is known, $F$ can be computed as
\begin{equation}\label{F_val_px}
F(x) = f(p(x)) + \frac{1}{2\lambda}\norm{p(x)-x}^2, 
\end{equation}
and hence the gradient of $ F$ will be given by \eqref{grad_F}. Since $p(x)$ is often unavailable in closed form, fortunately, for each $x\in\R^n$ and $\varepsilon>0$ there exists an approximate minimizer $p^a(x,\varepsilon)$ such that
\begin{equation}\label{approx_px_cond}
f(p^a(x,\varepsilon)) + \frac{1}{2\lambda}\norm{p^a(x,\varepsilon)-x}^2 \le F(x) + \varepsilon. 
\end{equation}

Now, using the vector $p^{a}(x,\varepsilon)$, it is possible to approximate $F(x)$ and $g(x)$ by
\begin{equation}\label{aprrox_F}
    F^{a}(x,\varepsilon) = f(p^{a}(x,\varepsilon)) + \frac{1}{2\lambda}\|p^{a}(x,\varepsilon)-x\|^2,
\end{equation}
and
\begin{equation}\label{approx_g}
    g^{a}(x,\varepsilon) = \frac{x - p^{a}(x,\varepsilon)}{\lambda}, 
\end{equation}
respectively.


 The next result quantifies how closely $F^a$ and $g^a$ approximate $F$ and $g$, in terms of the tolerance $\varepsilon$.

\begin{proposition}\label{prop:myr}
Let $p^a(x,\varepsilon)$ satisfies \eqref{approx_px_cond}, and let $F^a(x,\varepsilon)$ and $g^a(x,\varepsilon)$ be as in \eqref{aprrox_F} and \eqref{approx_g}, respectively. For arbitrary $\varepsilon,\varepsilon_1,\varepsilon_2 > 0$ and $\varepsilon_3 = \max\{\varepsilon_1,\varepsilon_2\}$,
\begin{align}
F(x) &\le F^a(x,\varepsilon) \le F(x) + \varepsilon, \label{eps_F}\\
\norm{p^a(x,\varepsilon) - p(x)} &\le \sqrt{2\lambda \varepsilon}, \label{eps_p}\\
\norm{g^a(x,\varepsilon) - g(x)} &\le \sqrt{\frac{2\varepsilon}{\lambda}}, \label{eps_g}\\
\norm{g^a(x,\varepsilon_1) - g^a(y,\varepsilon_2)} &\le \frac{1}{\lambda}\norm{x-y} + \sqrt{\frac{8\varepsilon_3}{\lambda}}. \label{approx_g_lips}
\end{align}
\end{proposition}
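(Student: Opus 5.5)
The plan is to derive all four estimates from the $\tfrac{1}{\lambda}$-strong convexity of the Moreau--Yosida subproblem objective. Fix $x$ and set
\[
\phi_x(z)=f(z)+\frac{1}{2\lambda}\norm{z-x}^2 .
\]
Since $f$ is convex, $\phi_x$ is $\tfrac1\lambda$-strongly convex. Its unique minimizer is $p(x)$ and its minimal value is $F(x)$ by \eqref{F_x}.

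\textbf{Step 1: the bounds \eqref{eps_F}.} By \eqref{aprrox_F}, $F^a(x,\varepsilon)=\phi_x(p^a(x,\varepsilon))$. Hence $F^a(x,\varepsilon)\ge \min_z\phi_x(z)=F(x)$. The upper bound $F^a(x,\varepsilon)\le F(x)+\varepsilon$ is exactly \eqref{approx_px_cond}.

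\textbf{Step 2: the bound \eqref{eps_p}.} This is the step that needs an actual argument. Strong convexity, together with $0\in\partial\phi_x(p(x))$, gives the quadratic growth inequality
\[
\phi_x(z)\ge \phi_x(p(x))+\frac{1}{2\lambda}\norm{z-p(x)}^2 \quad\text{for all } z .
\]
To justify it, I would take the subgradient $0$ at $p(x)$ in the strong convexity inequality. Alternatively, I would write $\phi_x=f+q$ with $q$ quadratic, expand $q$ exactly around $p(x)$, and use $-\nabla q(p(x))\in\partial f(p(x))$. Setting $z=p^a(x,\varepsilon)$ and using Step 1 yields $\frac{1}{2\lambda}\norm{p^a-p}^2\le\varepsilon$, which is \eqref{eps_p}.

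\textbf{Step 3: the bound \eqref{eps_g}.} From \eqref{grad_F} and \eqref{approx_g},
\[
g^a(x,\varepsilon)-g(x)=\frac{p(x)-p^a(x,\varepsilon)}{\lambda}.
\]
Dividing \eqref{eps_p} by $\lambda$ gives $\sqrt{2\lambda\varepsilon}/\lambda=\sqrt{2\varepsilon/\lambda}$.

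\textbf{Step 4: the bound \eqref{approx_g_lips}.} I would use the triangle inequality:
\[
\norm{g^a(x,\varepsilon_1)-g^a(y,\varepsilon_2)}\le \norm{g^a(x,\varepsilon_1)-g(x)}+\norm{g(x)-g(y)}+\norm{g(y)-g^a(y,\varepsilon_2)} .
\]
The middle term is at most $\frac1\lambda\norm{x-y}$ by \eqref{g_lips_cond}. By Step 3, each outer term is at most $\sqrt{2\varepsilon_3/\lambda}$, because $\varepsilon_i\le\varepsilon_3$. Their sum is $2\sqrt{2\varepsilon_3/\lambda}=\sqrt{8\varepsilon_3/\lambda}$.

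The only genuine obstacle is the quadratic growth inequality in Step 2. Everything else is bookkeeping.
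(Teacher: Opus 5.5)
Your proof is correct and complete. Step 1 follows directly from the definitions. The quadratic growth inequality in Step 2 follows, as you say, from the $\tfrac{1}{\lambda}$-strong convexity of $\phi_x$ together with $0\in\partial\phi_x(p(x))$. Steps 3 and 4 are valid bookkeeping using the $\tfrac{1}{\lambda}$-Lipschitz continuity of $g$.

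The paper gives no proof of this proposition; it presents these estimates as known facts from the cited Moreau--Yosida literature. Your argument is the standard derivation behind those results, so it supplies exactly what the paper leaves to its references.
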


Throughout the rest of the paper, $g^a(x_k,\varepsilon_k)$ will be abbreviated as $\tilde g_k$, which denotes the $\varepsilon_k$-inexact MYR gradient.

\section{The Proposed Method}\label{sec_method}

In this section, we develop a method for solving the optimization problem \eqref{MYR_min_problem}. The iterative formula of the proposed method for solving problem \eqref{MYR_min_problem} is given by
\begin{equation}\label{iter_nonsmooth}
x_{k+1} = x_k + \alpha_k d_k, 
\end{equation}
where $\alpha_k$ is the step size and $d_k$ the search direction. Generalizing \eqref{iter_seq_smooth} to the nonsmooth, inexact-gradient setting gives
\begin{equation}\label{dir_CG_nonsmooth}
d_{k+1} =
\begin{cases}
-\tilde g_0, & k=0,\\
-\tilde g_{k+1} + \beta_k d_k, & k \ge 1,
\end{cases} 
\end{equation}
and, generalizing the spectral direction \eqref{SCG_dir_smooth},
\begin{equation}\label{dir_nonsmooth_scg}
d_{k+1} =
\begin{cases}
-\tilde g_0, & k=0,\\
-\theta_{k+1}\tilde g_{k+1} + \beta_k d_k, & k \ge 1,
\end{cases} 
\end{equation}
where $\theta_{k+1}$ is the spectral parameter and $\beta_k$ the conjugate parameter. Note that \eqref{dir_nonsmooth_scg} reduces to \eqref{dir_CG_nonsmooth} when $\theta_{k+1}=1$. Since the spectral and conjugate-gradient parameters in \eqref{dir_nonsmooth_scg} can be suitably chosen, the proposed search direction can effectively incorporate the complementary advantages of both spectral and conjugate-gradient approaches.


To proceed to the next iteration by \eqref{iter_nonsmooth}, the direction $d_{k}$ is calculated by \eqref{dir_nonsmooth_scg} in which a conjugate parameter $\beta_k$ and a spectral parameter $\theta_{k+1}$ are required, and then a step length $\alpha_k$ is chosen such that the Wolfe-type conditions are satisfied. Next, we formulate these components in turn and then describe the resulting algorithm.

\subsection{The conjugate parameter $\beta_k$}

We adopt a Dai--Kou-type parameter built from the previous direction $d_k$ and the gradient difference $y_k = \tilde g_{k+1} - \tilde g_k$:
\begin{equation}\label{hat_betak}
\hat\beta_k = \frac{\tilde g_{k+1}^\top y_k}{d_k^\top y_k} - \norm{y_k}^2\frac{d_k^\top \tilde g_{k+1}}{(d_k^\top y_k)^2}, 
\end{equation}
well-defined whenever $d_k^\top y_k \neq 0$. We safeguard this by requiring $|d_k^\top y_k| \ge \rho$ for a fixed tolerance $\rho > 0$ (see Algorithm \ref{algo}, Step 4). Formula \eqref{hat_betak} is the $y_k$-based specialization of the Dai-Kou family.

The parameter $\hat\beta_k$ in \eqref{hat_betak} is not itself bounded: since $d_k^\top y_k$ may be arbitrarily small in magnitude while still clearing the safeguard threshold $\rho$, $\hat\beta_k$ can be arbitrarily large, and a direction built from  $\hat\beta_k$ need not have a bounded norm. We therefore truncate $\hat\beta_k$ before using it, at no cost to the descent guarantee established in Lemma \ref{lem:descent} below. Given $d_k$ and the newly computed $\tilde g_{k+1}$, set
\begin{equation}\label{betak}
\beta_k =
\begin{cases}
\hat\beta_k, & \text{if } |\hat\beta_k|\,\norm{d_k} \le \tfrac{1}{4}\norm{\tilde g_{k+1}},\\[4pt]
\operatorname{sign}(\hat\beta_k)\,\dfrac{\norm{\tilde g_{k+1}}}{4\norm{d_k}}, & \text{otherwise.}
\end{cases} 
\end{equation}
By construction, $|\beta_k|\,\norm{d_k} \le \tfrac14 \norm{\tilde g_{k+1}}$ in \emph{both} branches of \eqref{betak}, which is the only property of $\beta_k$ used in the proof of Lemma \ref{lem:descent}.

\subsubsection{The spectral parameter $\theta_k$}
Following a quasi-Newton consistency argument, we match the spectral conjugate gradient direction with an implicit quasi-Newton step satisfying $B_{k+1}s_k=y_k$. As in the derivation of the smooth spectral conjugate gradient parameters in \cite{faramarzi2019,wang2015}, we neglect the cross term $s_k^\top \tilde g_{k+1}$, which becomes negligible as $x_k$ approaches a stationary point. In the present setting, this argument is adapted to the Moreau-Yosida regularized gradient $\tilde g_k$ in place of the exact gradient. Accordingly, we define

\begin{equation}\label{theta_QN}
\theta_{k+1}^{QN} = 1 - \norm{y_k}^2\,\frac{d_k^\top \tilde g_{k+1}}{(d_k^\top y_k)(y_k^\top \tilde g_{k+1})}, 
\end{equation}
and safeguard it into a fixed interval,
\begin{equation}\label{theta_k}
\theta_{k+1} =
\begin{cases}
\theta_{k+1}^{QN}, & \text{if } \theta_{k+1}^{QN} \in \left[\tfrac14 + \eta,\ \tau\right],\\[4pt]
1, & \text{otherwise,}
\end{cases} 
\end{equation}
for fixed constants $0 < \eta < \tfrac34$ and $\tau > \tfrac14+\eta$. Both branches of \eqref{theta_k} (satisfy $\theta_{k+1} \ge \tfrac14+\eta$ by construction, since $1 \ge \tfrac14+\eta$ whenever $\eta < \tfrac34$, this bound is used directly in Lemma \ref{lem:descent} and Theorem \ref{thm:global}.


\subsubsection{Wolfe-type line search}

Given the descent direction from \eqref{dir_nonsmooth_scg} with $\beta_k$ and $\theta_{k+1}$ from \eqref{betak} and \eqref{theta_k}, a line search is needed to choose the step length $\alpha_k$. Motivated by the Wolfe conditions of Hager and Zhang \cite{hagerzhang} for smooth unconstrained problems \eqref{wolfe_con_smooth1}-\eqref{wolfe_con_smooth2}, we use the following nonsmooth Wolfe-type conditions:
\begin{align}
F^a(x_k + \alpha_k d_k,\ \varepsilon_{k+1}) -  F^{a}(x_k,\varepsilon_k)&\le \delta\,\alpha_k\, \tilde g_k^\top d_k, \label{nonsmooth_WC1}\\
\tilde g_{k+1}^\top d_k &\ge \sigma\, \tilde g_k^\top d_k, \label{nonsmooth_WC2}
\end{align}
where $0 < \delta \le \sigma < 1$. Condition \eqref{nonsmooth_WC1} plays the role of sufficient decrease, now stated for the computable surrogate $F^a$ rather than $F$ itself, by \eqref{eps_F}, $F(x_k+\alpha_kd_k) \le F^a(x_k+\alpha_kd_k,\varepsilon_{k+1})$, so \eqref{nonsmooth_WC2} still controls the true decrease in $F$ up to the vanishing slack $\varepsilon_{k+1}$.

\subsection{The algorithm}
\label{sec:algorithm}

Combining the direction \eqref{dir_nonsmooth_scg} with the safeguarded parameters \eqref{betak}, \eqref{theta_k} and the line search \eqref{nonsmooth_WC1}-\eqref{nonsmooth_WC2} gives the following procedure.

\begin{algorithm}[H]
\caption{Wolfe-type Spectral Conjugate Gradient for Nonsmooth Optimization}
\label{algo}
\begin{algorithmic}[1]
\Statex \textbf{Initialization:} Choose $x_0\in\R^n$, $\lambda>0$, $0<\delta\le\sigma<1$, $0<\eta<\tfrac34$, $\tau>\tfrac14+\eta$, $\varepsilon_0\in(0,1)$, safeguard $\rho>0$, tolerance $\mathrm{tol}>0$.
\Statex Compute $\tilde g_0 = g^a(x_0,\varepsilon_0)$, set $d_0=-\tilde g_0$, $\theta_0=1$, $k=0$.
\Statex \textbf{Step 1.} If $\norm{\tilde g_k} \le \mathrm{tol}$, stop.
\Statex \textbf{Step 2.} Choose $\varepsilon_{k+1}\in(0,\varepsilon_k)$, and compute $\alpha_k$ via the Wolfe-type line search \eqref{nonsmooth_WC1}-\eqref{nonsmooth_WC2}.
\Statex \textbf{Step 3.} Set $x_{k+1}=x_k+\alpha_k d_k$, $\tilde g_{k+1} = g^a(x_{k+1},\varepsilon_{k+1})$, $y_k=\tilde g_{k+1}-\tilde g_k$.
\Statex \textbf{Step 4.} If $|d_k^\top y_k| < \rho$: set $\beta_k=0$, $\theta_{k+1}=1$. Otherwise compute $\hat\beta_k$ by \eqref{hat_betak}, $\theta_{k+1}^{QN}$ by \eqref{theta_QN}, and set $\beta_k$, $\theta_{k+1}$ by \eqref{betak}, \eqref{theta_k}.
\Statex \textbf{Step 5.} Set $d_{k+1} = -\theta_{k+1}\tilde g_{k+1} + \beta_k d_k$, $k \leftarrow k+1$; go to Step 1.
\end{algorithmic}
\end{algorithm}

\begin{remark}
By inequality \eqref{g_corcive}, the exact secant pair $s_k = x_{k+1}-x_k$, $y_k = g(x_{k+1})-g(x_k)$ satisfies (on setting $x=x_{k+1}$, $y=x_k$ in \eqref{g_corcive}):
\begin{equation}
s_k^\top y_k \ge \lambda\norm{y_k}^2 \ge 0. 
\end{equation}
Algorithm \ref{algo} instead forms $y_k$ from the inexact pair $\tilde g_{k+1}-\tilde g_k$, this inexact $y_k$ deviates from the exact secant $g(x_{k+1})-g(x_k)$ by an amount of order $\sqrt{\varepsilon_k}+\sqrt{\varepsilon_{k+1}}$, so $d_k^\top y_k$ can fail to be nonnegative only by a vanishing margin as $\varepsilon_k\to0$. This is precisely why Step 4 of Algorithm \ref{algo} safeguards against $|d_k^\top y_k|<\rho$ rather than assuming $s_k^\top y_k\ge0$ outright: the safeguard is expected to be inactive except along a vanishing subsequence, rather than being a generic obstruction.
\end{remark}

\subsection{Global convergence properties}

We now prove that Algorithm \ref{algo} converges globally to a solution of \eqref{MYR_min_problem}. The argument rests on three assumptions: boundedness of the initial level set, boundedness of $f$ below, and a rate condition on the inexactness sequence $\{\varepsilon_k\}$.

\begin{assumption}\label{ass:level}
The level set $L_0 = \{x\in\R^n : F(x) \le F(x_0)\}$ is bounded.
\end{assumption}

\begin{assumption}\label{ass:below}
$f$, and hence $F$, is bounded below on $\R^n$.
\end{assumption}

\begin{assumption}\label{ass:eps}
$\varepsilon_k \downarrow 0$, and $\varepsilon_k = o(\norm{d_k}^2)$.
\end{assumption}


We first show that the direction $d_k$ is a sufficient-descent direction with a norm controlled by $\norm{\tilde g_k}$. This is the key structural property from which both the step-length bound and global convergence follow.

\begin{lemma}\label{lem:descent}
For the search direction $d_k$ defined in \eqref{dir_nonsmooth_scg}, the following hold for all $k\ge0$:
\begin{align}
\tilde g_k^\top d_k &\le -\left(\theta_k - \tfrac14\right)\norm{\tilde g_k}^2, \label{des_prop}\\
\norm{d_k} &\le \left(\theta_k+\tfrac14\right)\norm{\tilde g_k}. \label{trust_reg_prop}
\end{align}
\end{lemma}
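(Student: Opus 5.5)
The proof goes by direct computation from the structure of $d_k$, with no induction needed beyond the base case. The one property of $\beta_{k-1}$ it uses is the bound $|\beta_{k-1}|\,\norm{d_{k-1}} \le \tfrac14\norm{\tilde g_k}$, which holds by construction in \eqref{betak} and trivially in the safeguard branch of Step~4, where $\beta_{k-1}=0$.

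\textbf{Base case $k=0$.} Here $d_0=-\tilde g_0$ and $\theta_0=1$. Then $\tilde g_0^\top d_0 = -\norm{\tilde g_0}^2 \le -\tfrac34\norm{\tilde g_0}^2$ and $\norm{d_0}=\norm{\tilde g_0}\le \tfrac54\norm{\tilde g_0}$, so both \eqref{des_prop} and \eqref{trust_reg_prop} hold.

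\textbf{Step $k\ge1$.} Write $d_k = -\theta_k\tilde g_k + \beta_{k-1}d_{k-1}$ as in Step~5 of Algorithm~\ref{algo}.
\begin{itemize}
\item For the descent inequality, take the inner product with $\tilde g_k$ and apply Cauchy--Schwarz to the memory term:
\[
\tilde g_k^\top d_k = -\theta_k\norm{\tilde g_k}^2 + \beta_{k-1}\tilde g_k^\top d_{k-1} \le -\theta_k\norm{\tilde g_k}^2 + |\beta_{k-1}|\,\norm{d_{k-1}}\,\norm{\tilde g_k} \le -\left(\theta_k-\tfrac14\right)\norm{\tilde g_k}^2 .
\]
\item For the norm bound, the triangle inequality gives $\norm{d_k}\le \theta_k\norm{\tilde g_k} + |\beta_{k-1}|\,\norm{d_{k-1}} \le (\theta_k+\tfrac14)\norm{\tilde g_k}$. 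This uses $\theta_k>0$, which holds in every branch of \eqref{theta_k} and Step~4.
\end{itemize}

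The only point needing care is bookkeeping rather than analysis. We must confirm that in every branch of Step~4 the pair $(\beta_{k-1},\theta_k)$ satisfies $|\beta_{k-1}|\norm{d_{k-1}}\le\frac14\norm{\tilde g_k}$ and $\theta_k\ge\frac14+\eta>0$. We must also handle the degenerate case $d_{k-1}=0$, where the truncation formula in \eqref{betak} divides by $\norm{d_{k-1}}$. That case cannot occur: otherwise the algorithm would have stopped earlier, since by the norm bound at step $k-1$, $d_{k-1}=0$ forces $\tilde g_{k-1}=0$ through the descent inequality. In any case the memory term $\beta_{k-1}d_{k-1}$ vanishes there, so the bounds still hold. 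With these checks done, the lemma follows immediately, and positivity of $\theta_k-\tfrac14\ge\eta$ makes \eqref{des_prop} a genuine sufficient-descent condition.
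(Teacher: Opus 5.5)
Your proof is correct, and it departs from the paper's in one place. The paper splits the descent estimate according to the two branches of \eqref{betak}. In the truncated branch it argues exactly as you do, using Cauchy--Schwarz and $|\beta_k|\norm{d_k}=\tfrac14\norm{\tilde g_{k+1}}$. In the untruncated branch $\beta_k=\hat\beta_k$, however, it uses the Dai--Kou structure of \eqref{hat_betak}: it writes $\hat\beta_k\,\tilde g_{k+1}^\top d_k=u^\top v-\norm{v}^2$ with $u=\tilde g_{k+1}$ and $v=\frac{\tilde g_{k+1}^\top d_k}{d_k^\top y_k}\,y_k$, and then applies $u^\top v\le\tfrac14\norm{u}^2+\norm{v}^2$.

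You notice that this is unnecessary. The truncation bound $|\beta_{k-1}|\norm{d_{k-1}}\le\tfrac14\norm{\tilde g_k}$ holds in every branch, so one Cauchy--Schwarz step handles all cases at once. This is in fact what the paper's own remark before the lemma says is ``the only property of $\beta_k$ used.'' Your route is shorter and shows the lemma holds for any truncated parameter, not just a Dai--Kou one. The paper's Case~1 argument buys the additional fact that the untruncated $\hat\beta_k$ already satisfies $\hat\beta_k\,\tilde g_{k+1}^\top d_k\le\tfrac14\norm{\tilde g_{k+1}}^2$. So truncation is really needed only for the norm bound \eqref{trust_reg_prop}. You also treat the Step~4 safeguard branch ($\beta_{k-1}=0$, $\theta_k=1$) explicitly, which the paper's case split leaves implicit.

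One small correction to your degenerate-case remark. When $d_{k-1}=0$, it is the descent inequality at index $k-1$, not the norm bound, that forces $\tilde g_{k-1}=0$. So you do use the lemma at the previous index after all. More simply, $d_{k-1}=0$ gives $|d_{k-1}^\top y_{k-1}|=0<\rho$, so Step~4 sets $\beta_{k-1}=0$ and the truncation formula is never evaluated.
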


\begin{proof}
For $k=0$, $d_0=-\tilde g_0$ and $\theta_0=1$, so
$$\tilde g_0^\top d_0 = -\norm{\tilde g_0}^2 \le -(\theta_0-\tfrac14)\norm{\tilde g_0}^2$$ 

and $\norm{d_0}=\norm{\tilde g_0}\le(\theta_0+\tfrac14)\norm{\tilde g_0}$, both \eqref{des_prop} and \eqref{trust_reg_prop} hold.

For $k\ge0$, take the inner product of \eqref{dir_nonsmooth_scg} with $\tilde g_{k+1}$:
\begin{equation}\label{beta_time_scg_dirr}
\tilde g_{k+1}^\top d_{k+1} = -\theta_{k+1}\norm{\tilde g_{k+1}}^2 + \beta_k\,\tilde g_{k+1}^\top d_k. 
\end{equation}

\smallskip
\noindent\textit{Case 1: $|\hat\beta_k|\norm{d_k}\le\tfrac14\norm{\tilde g_{k+1}}$, so $\beta_k=\hat\beta_k$.} Using \eqref{hat_betak},
\begin{equation}\label{case1_beta_time_scg_dir}
\hat\beta_k\big(\tilde g_{k+1}^\top d_k\big) = \frac{(\tilde g_{k+1}^\top y_k)(\tilde g_{k+1}^\top d_k)}{d_k^\top y_k} - \norm{y_k}^2\frac{(d_k^\top \tilde g_{k+1})^2}{(d_k^\top y_k)^2}.
\end{equation}
Set $u = \tilde g_{k+1}$ and $v = \dfrac{(\tilde g_{k+1}^\top d_k)}{d_k^\top y_k}\,y_k$. Then
\[
u^\top v = \frac{(\tilde g_{k+1}^\top y_k)(\tilde g_{k+1}^\top d_k)}{d_k^\top y_k}, \qquad \norm{v}^2 = \norm{y_k}^2\frac{(\tilde g_{k+1}^\top d_k)^2}{(d_k^\top y_k)^2},
\]
which are exactly the two terms on the right of \eqref{case1_beta_time_scg_dir} (recall $d_k^\top\tilde g_{k+1}=\tilde g_{k+1}^\top d_k$). Since $\norm{u/2-v}^2\ge0$ for any $u,v$, we have the elementary inequality $u^\top v \le \tfrac14\norm{u}^2+\norm{v}^2$, i.e.,
\begin{equation}\label{second_term_bound}
\frac{(\tilde g_{k+1}^\top y_k)(\tilde g_{k+1}^\top d_k)}{d_k^\top y_k} \le \tfrac14\norm{\tilde g_{k+1}}^2 + \norm{y_k}^2\frac{(\tilde g_{k+1}^\top d_k)^2}{(d_k^\top y_k)^2}.
\end{equation}
Substituting \eqref{second_term_bound} into \eqref{case1_beta_time_scg_dir}, we obtain
\begin{equation}\label{second_term_inequality}
\hat\beta_k\big(\tilde g_{k+1}^\top d_k\big) \le \tfrac14\norm{\tilde g_{k+1}}^2. 
\end{equation}
Substituting \eqref{second_term_inequality} into \eqref{beta_time_scg_dirr} gives $\tilde g_{k+1}^\top d_{k+1} \le -\theta_{k+1}\norm{\tilde g_{k+1}}^2 + \tfrac14\norm{\tilde g_{k+1}}^2 = -(\theta_{k+1}-\tfrac14)\norm{\tilde g_{k+1}}^2$, and \eqref{theta_k} guarantees $\theta_{k+1}\ge\tfrac14+\eta$, giving \eqref{des_prop}. The triangle inequality applied to \eqref{dir_nonsmooth_scg}, together with $|\hat\beta_k|\norm{d_k}\le\tfrac14\norm{\tilde g_{k+1}}$, gives
\[
\norm{d_{k+1}} \le \theta_{k+1}\norm{\tilde g_{k+1}} + |\hat\beta_k|\norm{d_k} \le \left(\theta_{k+1}+\tfrac14\right)\norm{\tilde g_{k+1}},
\]
which is \eqref{trust_reg_prop}.

\smallskip
\noindent\textit{Case 2: $|\hat\beta_k|\norm{d_k} > \tfrac14\norm{\tilde g_{k+1}}$, so $\beta_k = \operatorname{sign}(\hat\beta_k)\norm{\tilde g_{k+1}}/(4\norm{d_k})$.} By Cauchy--Schwarz,
\[
\beta_k\big(\tilde g_{k+1}^\top d_k\big) \le |\beta_k|\,\norm{\tilde g_{k+1}}\,\norm{d_k} = \tfrac14\norm{\tilde g_{k+1}}^2,
\]
since $|\beta_k|\norm{d_k}=\tfrac14\norm{\tilde g_{k+1}}$ by the second branch of \eqref{betak}. Substituting into \eqref{case1_beta_time_scg_dir} and using $\theta_{k+1}\ge\tfrac14+\eta$ gives \eqref{des_prop} and, via the same triangle-inequality argument, \eqref{trust_reg_prop}.
\end{proof}


Next we turn the descent bound into a lower bound on the step length itself, which is the remaining ingredient needed for global convergence.

\begin{lemma}\label{lem:steplength}
Suppose Assumption \ref{ass:eps} holds and $\{x_k\}$ is generated by Algorithm \ref{algo}. Then for any $\epsilon_0>0$ such that $\norm{\tilde g_k}\ge\epsilon_0$ for all $k$ in an infinite index set, there exist $\bar k$ and $m>0$ such that
\begin{equation}\label{least_steplength}
\alpha_k \ge m \qquad \text{for all } k > \bar k. 
\end{equation}
\end{lemma}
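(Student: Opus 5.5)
The proof will use the curvature condition \eqref{nonsmooth_WC2} together with the perturbed Lipschitz bound \eqref{approx_g_lips} of Proposition~\ref{prop:myr}. Subtracting $\tilde g_k^\top d_k$ from both sides of \eqref{nonsmooth_WC2} gives
\[
(\tilde g_{k+1}-\tilde g_k)^\top d_k \ge -(1-\sigma)\,\tilde g_k^\top d_k \ge (1-\sigma)\left(\theta_k-\tfrac14\right)\norm{\tilde g_k}^2 \ge (1-\sigma)\,\eta\,\norm{\tilde g_k}^2 .
\]
The second inequality is \eqref{des_prop} of Lemma~\ref{lem:descent}, and the third uses $\theta_k\ge\tfrac14+\eta$ from \eqref{theta_k}.

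For the upper bound, apply Cauchy--Schwarz and \eqref{approx_g_lips} with $x=x_{k+1}$, $y=x_k$ and $\varepsilon_3=\max\{\varepsilon_k,\varepsilon_{k+1}\}=\varepsilon_k$, the latter because $\varepsilon_k$ is decreasing. This gives
\[
y_k^\top d_k \le \norm{y_k}\norm{d_k} \le \frac{\alpha_k}{\lambda}\norm{d_k}^2 + \sqrt{\frac{8\varepsilon_k}{\lambda}}\,\norm{d_k}.
\]
Combining the two bounds and dividing by $\norm{d_k}^2$, which is nonzero since $\norm{\tilde g_k}\ge\epsilon_0>0$ forces $d_k\neq0$ by \eqref{des_prop}, we get
\[
\frac{\alpha_k}{\lambda} \ge \frac{(1-\sigma)\eta\norm{\tilde g_k}^2}{\norm{d_k}^2} - \sqrt{\frac{8\varepsilon_k}{\lambda}}\,\frac{1}{\norm{d_k}} .
\]
The first term is bounded below by a constant through \eqref{trust_reg_prop}. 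The parameter $\theta_k$ is bounded above by $\max\{\tau,1\}$, so $\norm{d_k}\le(\max\{\tau,1\}+\tfrac14)\norm{\tilde g_k}$. Setting $c_0=(1-\sigma)\eta/(\max\{\tau,1\}+\tfrac14)^2>0$, the first term is at least $c_0$, and this holds for every $k$ without using $\epsilon_0$.

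The second term, $\sqrt{8\varepsilon_k/\lambda}/\norm{d_k}=\sqrt{8/\lambda}\,\sqrt{\varepsilon_k/\norm{d_k}^2}$, tends to zero by Assumption~\ref{ass:eps}, since $\varepsilon_k=o(\norm{d_k}^2)$. Choose $\bar k$ so that this term is at most $c_0/2$ for all $k>\bar k$. Then $\alpha_k\ge m:=\lambda c_0/2$ for all $k>\bar k$, which is \eqref{least_steplength}. The hypothesis $\norm{\tilde g_k}\ge\epsilon_0$ only serves to keep $d_k\neq0$. If Assumption~\ref{ass:eps} were read along the index set only, the same argument would give the bound for large $k$ in that set. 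Alternatively, one could use $\norm{d_k}\ge(\theta_k-\tfrac14)\norm{\tilde g_k}\ge\eta\epsilon_0$ together with $\varepsilon_k\to0$ to drive the error term to zero.

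The main obstacle is the inexact term $\sqrt{8\varepsilon_k/\lambda}\,\norm{d_k}$. Its relative size must be controlled against $\norm{d_k}^2$, and it is exactly here that Assumption~\ref{ass:eps}, or alternatively the bound $\norm{\tilde g_k}\ge\epsilon_0$ combined with the descent lower bound on $\norm{d_k}$, is needed. A secondary point is checking that $\theta_k$ is uniformly bounded above, which holds because both branches of \eqref{theta_k} lie in $[\tfrac14+\eta,\max\{\tau,1\}]$.
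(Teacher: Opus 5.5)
Your proof is correct and follows the paper's argument: you combine the curvature condition \eqref{nonsmooth_WC2} with \eqref{approx_g_lips}, bound the main term below by a constant via \eqref{des_prop}--\eqref{trust_reg_prop}, and use Assumption~\ref{ass:eps} to make the inexactness term vanish. Your upper bound $\theta_k\le\max\{\tau,1\}$ is slightly more careful than the paper's $\theta_k\le\tau$, which fails when $\tau<1$ and the fallback value $\theta_k=1$ is used.
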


\begin{proof}
Subtracting $\tilde g_k^\top d_k$ from both sides of \eqref{nonsmooth_WC2},
\[
(\sigma-1)\tilde g_k^\top d_k \le (\tilde g_{k+1}-\tilde g_k)^\top d_k \le \norm{\tilde g_{k+1}-\tilde g_k}\,\norm{d_k}.
\]
By \eqref{approx_g_lips}, with $\varepsilon_3=\max\{\varepsilon_k,\varepsilon_{k+1}\}$, and $x_{k+1}-x_k=\alpha_k d_k$, we obtain
\begin{equation}\label{WC2_use_lemma}
(\sigma-1)\tilde g_k^\top d_k \le \left(\frac{1}{\lambda}\alpha_k\norm{d_k} + \sqrt{\frac{8\varepsilon_3}{\lambda}}\right)\norm{d_k}. 
\end{equation}
Rearranging \eqref{WC2_use_lemma},
\[
\alpha_k \ge \frac{\lambda(\sigma-1)\tilde g_k^\top d_k}{\norm{d_k}^2} - \sqrt{\lambda}\,\frac{\sqrt{8\varepsilon_3}}{\norm{d_k}}.
\]
Since $\tilde g_k^\top d_k < 0$ (Lemma \ref{lem:descent}), $(\sigma-1)\tilde g_k^\top d_k>0$ and by Assumption \ref{ass:eps}, $\varepsilon_3/\norm{d_k}^2 \to 0$, so the second term vanishes as $k\to\infty$. Therefore, using \eqref{des_prop}-\eqref{trust_reg_prop}, we obtain
\[\alpha_{k}\ge
\frac{\lambda(\sigma-1)\tilde g_k^\top d_k}{\norm{d_k}^2} \ge \frac{\lambda(1-\sigma)\left(\theta_k-\tfrac14\right)}{\left(\theta_k+\tfrac14\right)^2} \ge \frac{\lambda(1-\sigma)\eta}{\left(\tau+\tfrac14\right)^2},
\]
where the last step uses $\tfrac14+\eta\le\theta_k\le\tau$ for all $k$. Hence, for any $m \in \left(0,\ \dfrac{\lambda(1-\sigma)\eta}{(\tau+1/4)^2}\right)$ there exists $\bar k$ such that \eqref{least_steplength} holds for all $k>\bar k$.
\end{proof}

With Lemmas \ref{lem:descent} and \ref{lem:steplength} in hand, we can now state and prove the main global convergence result of this section.

\begin{theorem}\label{thm:global}
Under Assumptions \ref{ass:level}-\ref{ass:eps}, the sequence $\{x_k\}$ generated by Algorithm \ref{algo} satisfies
\[
\lim_{k\to\infty} \norm{g(x_k)} = 0,
\]
and every accumulation point of $\{x_k\}$ solves the nonsmooth problem \eqref{nonsmooth_prob} (equivalently, \eqref{MYR_min_problem}).
\end{theorem}

\begin{proof}
We first show
\begin{equation}\label{norm_g_0}
\lim_{k\to\infty}\norm{\tilde g_k} = 0. 
\end{equation}
Assume that \eqref{norm_g_0} is not true, then there exist $\epsilon_0>0$ and $\bar k_1>0$ such that $\norm{\tilde g_k}\ge\epsilon_0~~\text{for all }k>\bar k_1.$
Applying Lemma \ref{lem:steplength} with this $\epsilon_0$ gives  $m>0$ and $\bar k_2>0$ such that $\alpha_k\ge m$ for all $k>\bar k_2.$ Set $\bar{k}=\max\{\bar k_1,\bar k_2\},$ so that both
\begin{equation}
    \|\tilde g_k\| \ge \epsilon_0~~~~\text{and}~~~~~\alpha_k \ge m
\end{equation}
hold simultaneously for all $k>\bar k.$
By \eqref{nonsmooth_WC1} and \eqref{eps_F}, $F(x_{k+1}) \le F^a(x_{k+1},\varepsilon_{k+1}) \le F^a(x_k,\varepsilon_k) + \delta\alpha_k \tilde g_k^\top d_k$, i.e.,
\[
 F^a(x_k,\varepsilon_k) - F^a(x_{k+1},\varepsilon_{k+1}) \ge -\delta\alpha_k \tilde g_k^\top d_k \ge \delta\alpha_k\left(\theta_k-\tfrac14\right)\norm{\tilde g_k}^2 \ge \delta\,m\,\eta\,\epsilon_0^2 = c_0 > 0~~\forall~k > \bar{k}.
\]
 The last inequality holds because of \eqref{des_prop} and $\theta_k\ge\tfrac14+\eta$. Summing over $k>\bar{k}$, i.e., $k=\bar{k}+1, \bar{k}+2,\ldots,N$ we obtain

\[
 F^a(x_{k+1},\varepsilon_{k+1}) - F^a(x_{N+1},\varepsilon_{N+1}) \ge (N-\bar k)\,c_0 \;\longrightarrow\; \infty \quad \text{as } N\to\infty,
\]
so $F^a(x_{N+1},\varepsilon_{N+1}) \to -\infty$. Since $F(x_{N+1}) \le F^a(x_{N+1},\varepsilon_{N+1})$ by \eqref{eps_F}, this forces $F(x_{N+1})\to-\infty$, contradicting Assumption \ref{ass:below} ($F$ bounded below). This contradiction proves \eqref{norm_g_0}.

By \eqref{eps_g}, i.e., $\norm{\tilde g_k - g(x_k)} \le \sqrt{2\varepsilon_k/\lambda}$, and $\varepsilon_k\to0$ ( Assumption \ref{ass:eps}), combined with \eqref{norm_g_0} gives
\begin{equation}\label{norm_lim_g_0}
\lim_{k\to\infty}\norm{g(x_k)} = 0. 
\end{equation}
Let $x^*$ be an accumulation point of $\{x_k\}$, with $x_k \to x^*$ along a subsequence $K$. By continuity of $g$ (see \eqref{g_lips_cond}) and \eqref{norm_lim_g_0} gives $g(x^*)=0$. By \eqref{grad_F}, $g(x^*)=0 \iff x^*=p(x^*)$, and by property (ii) of Section \ref{section_prelim} this is equivalent to $x^*$ solving \eqref{MYR_min_problem}, equivalently \eqref{nonsmooth_prob}.
\end{proof}

\subsection{Linear convergence rate under strong convexity}

In this section, we show that if $f$ is strongly convex, Algorithm \ref{algo} converges at an R-linear rate. This mirrors the rate results available for the smooth spectral CG methods this paper builds on \cite{faramarzi2019}, and for the proximal nonlinear CG framework of \cite{hamana2025} under an analogous strong-convexity hypothesis on the smooth part of the objective. The argument requires a step-length lower bound that holds from the very first iteration (rather than only eventually, as in Lemma \ref{lem:steplength}), which we establish first under a uniform, non-asymptotic strengthening of Assumption \ref{ass:eps}.

\begin{assumption}\label{ass:unif}
There is a constant $\gamma>0$ such that
\begin{equation}\label{Assum4}
\varepsilon_k \le \gamma\,\norm{d_k}^2 \qquad \text{for all } k\ge0, 
\end{equation}
with $\gamma \le \dfrac{m_0^2}{32\lambda}$, where $m_0 := \dfrac{\lambda(1-\sigma)\eta}{(\tau+1/4)^2}$ is the constant appearing in the proof of Lemma \ref{lem:steplength}.
\end{assumption}

\begin{remark}
Assumption \ref{ass:unif} strengthens Assumption \ref{ass:eps} from an asymptotic condition ($\varepsilon_k=o(\norm{d_k}^2)$, required only for $k$ large) to one required at every iteration. In particular, it implies Assumption \ref{ass:eps}. It is implementable in Step 2 of Algorithm \ref{algo} by capping the chosen $\varepsilon_{k+1}$ against $\gamma\norm{d_k}^2$ (or, since $d_{k+1}$ is not yet available when $\varepsilon_{k+1}$ is chosen, against the previous direction norm $\gamma\norm{d_k}^2$ as a computable proxy), and costs nothing beyond an explicit constant $\gamma$ fixed in advance.
\end{remark}

Under this strengthened assumption, the step-length bound of Lemma \ref{lem:steplength} can be made uniform in $k$, holding from the first iteration rather than only eventually.

\begin{lemma}\label{lem:unifstep}
Suppose Assumption \ref{ass:unif} holds. Then for every $k\ge0$ at which Algorithm \ref{algo} has not terminated (i.e., $\tilde g_k\neq0$),
\begin{equation}\label{uniform_steplength}
\alpha_k \ge m = \frac{m_0}{2}.
\end{equation}
\end{lemma}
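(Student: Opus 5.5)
We will redo the argument of Lemma~\ref{lem:steplength}, but now control the inexactness term at every iteration using Assumption~\ref{ass:unif} instead of letting it vanish asymptotically. Fix $k\ge0$ with $\tilde g_k\neq0$. By \eqref{trust_reg_prop}, $d_k\neq 0$ whenever $\tilde g_k\ne0$, since $\tilde g_k^\top d_k<0$ by \eqref{des_prop}. Exactly as in the proof of Lemma~\ref{lem:steplength}, we subtract $\tilde g_k^\top d_k$ from \eqref{nonsmooth_WC2}, apply Cauchy--Schwarz, and use \eqref{approx_g_lips} with $\varepsilon_3=\max\{\varepsilon_k,\varepsilon_{k+1}\}$ and $x_{k+1}-x_k=\alpha_kd_k$. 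Dividing by $\norm{d_k}^2/\lambda$ then gives
\[
\alpha_k \ge \frac{\lambda(1-\sigma)\,(-\tilde g_k^\top d_k)}{\norm{d_k}^2} - \frac{\sqrt{8\lambda\varepsilon_3}}{\norm{d_k}}.
\]
By \eqref{des_prop}, \eqref{trust_reg_prop} and $\tfrac14+\eta\le\theta_k\le\tau$, the first term is at least $m_0$, just as before. This part is routine.

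The key step is to bound the second term by $m_0/2$ for every $k$. Since $\varepsilon_{k+1}<\varepsilon_k$ by Step~2, we have $\varepsilon_3=\varepsilon_k$. Assumption~\ref{ass:unif} gives $\varepsilon_k\le\gamma\norm{d_k}^2$, so
\[
\frac{\sqrt{8\lambda\varepsilon_3}}{\norm{d_k}}\le\sqrt{8\lambda\gamma}\le\sqrt{8\lambda\cdot\frac{m_0^2}{32\lambda}}=\frac{m_0}{2}.
\]
Combining the two bounds yields $\alpha_k\ge m_0-m_0/2=m_0/2$, which is \eqref{uniform_steplength}.

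The main point of care is the identification $\varepsilon_3=\varepsilon_k$. It relies on the monotonicity $\varepsilon_{k+1}<\varepsilon_k$ built into Step~2, and it matters because Assumption~\ref{ass:unif} pairs $\varepsilon_k$ with $d_k$, not $\varepsilon_{k+1}$ with $d_k$. If monotonicity were unavailable, I would instead invoke the implementation remark that $\varepsilon_{k+1}$ is capped by $\gamma\norm{d_k}^2$, which gives the same bound. A second check is that the upper bound $\theta_k\le\tau$ holds in both branches of \eqref{theta_k}. In the fallback branch $\theta_k=1$, so this requires $\tau\ge1$ or else replacing $\tau$ by $\max\{\tau,1\}$. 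I would note this tacitly, as the proof of Lemma~\ref{lem:steplength} already does, since the constant $m_0$ is inherited from there.
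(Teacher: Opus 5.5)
Your proposal is correct and follows the paper's proof essentially line for line: the curvature condition \eqref{nonsmooth_WC2} is combined with \eqref{approx_g_lips} and $\varepsilon_3=\varepsilon_k$, the first term is bounded below by $m_0$, and the inexactness term by $\sqrt{8\lambda\gamma}\le m_0/2$. Your caveat is a genuine point the paper passes over: the bound $\theta_k\le\tau$ fails in the fallback branch $\theta_k=1$ unless $\tau\ge 1$ (and, as a minor correction, $d_k\neq 0$ follows from \eqref{des_prop}, not \eqref{trust_reg_prop}).
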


\begin{proof}
Same as in the proof of Lemma \ref{lem:steplength}, \eqref{nonsmooth_WC2} together with \eqref{approx_g_lips} (using $\varepsilon_3=\varepsilon_k$, since $\{\varepsilon_k\}$ is decreasing) gives
\[
\alpha_k \ge \frac{\lambda(\sigma-1)\tilde g_k^\top d_k}{\norm{d_k}^2} - \frac{\sqrt{8\lambda\varepsilon_k}}{\norm{d_k}}~~~\text{for all }k.
\]
By \eqref{des_prop}, \eqref{trust_reg_prop} and $\tfrac14+\eta\le\theta_k\le\tau$, the first term is bounded below by $m_0$ for every $k$ with $\tilde g_k\neq0$, same as computed in the proof of Lemma \ref{lem:steplength}. For the second term, Assumption \ref{ass:unif} gives $\sqrt{8\lambda\varepsilon_k} \le \sqrt{8\lambda\gamma}\,\norm{d_k}$, so
\[
\frac{\sqrt{8\lambda\varepsilon_k}}{\norm{d_k}} \le \sqrt{8\lambda\gamma} \le \sqrt{8\lambda\cdot\frac{m_0^2}{32\lambda}} = \frac{m_0}{2},
\]
using $\gamma\le m_0^2/(32\lambda)$. Combining, $\alpha_k \ge m_0 - m_0/2 = m_0/2 = m$ for every such $k$.
\end{proof}


\begin{assumption}\label{ass:strongconvex}
$f$ is $\mu$-strongly convex for some $\mu>0$, i.e., $f(y) \ge f(x) + \xi^\top(y-x) + \tfrac{\mu}{2}\norm{y-x}^2$ for all $x,y\in\R^n$ and all $\xi\in\partial f(x)$.
\end{assumption}

\begin{lemma}\label{lem:strongF}
Under Assumption \ref{ass:strongconvex}, let $\nu = \dfrac{\mu}{1+\lambda\mu}$. Then:
\begin{enumerate}
\item[(a)] $p$ (see \eqref{exact_p}) is a contraction: $\norm{p(x_1)-p(x_2)} \le \dfrac{1}{1+\lambda\mu}\norm{x_1-x_2}$ for all $x_1,x_2\in\R^n$.
\item[(b)] $F$ is $\nu$-strongly convex. In particular, $F$ has a unique global minimizer $x^\star$, and $F(x)-F^\star \ge \tfrac{\nu}{2}\norm{x-x^\star}^2$ for all $x$, where $F^\star=F(x^\star)$.
\item[(c)] $\norm{g(x)}^2 \ge 2\nu\big(F(x)-F^\star\big)$ for all $x\in\R^n$.
\end{enumerate}
\end{lemma}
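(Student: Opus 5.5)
For part (a), I would use the optimality condition of the proximal problem \eqref{exact_p}: $p(x)$ is characterized by $\frac{x-p(x)}{\lambda}\in\partial f(p(x))$. Take $x_1,x_2$ and set $\xi_i=(x_i-p(x_i))/\lambda\in\partial f(p(x_i))$. Adding the two strong-convexity inequalities of Assumption \ref{ass:strongconvex}, written at $p(x_1)$ and at $p(x_2)$, gives strong monotonicity of the subdifferential: $(\xi_1-\xi_2)^\top(p(x_1)-p(x_2))\ge\mu\norm{p(x_1)-p(x_2)}^2$. Substituting the $\xi_i$ and writing $\Delta p=p(x_1)-p(x_2)$, $\Delta x=x_1-x_2$, this becomes $\Delta x^\top\Delta p\ge(1+\lambda\mu)\norm{\Delta p}^2$. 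Cauchy--Schwarz on the left then yields $\norm{\Delta p}\le\frac{1}{1+\lambda\mu}\norm{\Delta x}$.

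For part (b), I would show strong monotonicity of $g$ with modulus $\nu$. From \eqref{grad_F}, $(g(x_1)-g(x_2))^\top\Delta x=\frac1\lambda(\norm{\Delta x}^2-\Delta x^\top\Delta p)$. The bound in (a) alone does not give the right constant. Instead I would use the sharper relation obtained above, written in terms of $w=\lambda(g(x_1)-g(x_2))=\Delta x-\Delta p$. The inequality $\Delta x^\top\Delta p\ge(1+\lambda\mu)\norm{\Delta p}^2$ becomes $(\Delta x-w)^\top(\Delta x-\Delta p)\ge\lambda\mu\norm{\Delta p}^2$, that is, $w^\top\Delta p\ge\lambda\mu\norm{\Delta p}^2$.

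I then need $w^\top\Delta x\ge\lambda\nu\norm{\Delta x}^2$. Write $\Delta x=w+\Delta p$, so that $w^\top\Delta x=\norm{w}^2+w^\top\Delta p\ge\norm{w}^2+\lambda\mu\norm{\Delta p}^2$. Also $\norm{\Delta x}^2\le(1+\lambda\mu)\bigl(\norm{w}^2+\lambda\mu\norm{\Delta p}^2\bigr)/(\lambda\mu)$ by the weighted Cauchy inequality $\norm{a+b}^2\le(1+t)\norm{a}^2+(1+1/t)\norm{b}^2$ with $t=1/(\lambda\mu)$. Combining the two bounds gives $w^\top\Delta x\ge\frac{\lambda\mu}{1+\lambda\mu}\norm{\Delta x}^2$. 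Hence $(g(x_1)-g(x_2))^\top\Delta x\ge\nu\norm{\Delta x}^2$, and a differentiable function with $\nu$-strongly monotone gradient is $\nu$-strongly convex. An alternative route is the infimal-convolution identity $F=f\,\square\,\frac{1}{2\lambda}\norm{\cdot}^2$, whose conjugate $f^*+\frac\lambda2\norm{\cdot}^2$ is $(1/\mu+\lambda)$-smooth; but the monotonicity route stays within the paper's tools.

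Strong convexity of $F$ gives coercivity, so a minimizer exists, and it is unique. Since $g(x^\star)=0$, the strong convexity inequality at $x^\star$ gives $F(x)-F^\star\ge\frac\nu2\norm{x-x^\star}^2$. For (c), I would minimize the right side of $F(y)\ge F(x)+g(x)^\top(y-x)+\frac\nu2\norm{y-x}^2$ over $y$; this gives $F^\star\ge F(x)-\frac{1}{2\nu}\norm{g(x)}^2$, which is the Polyak--\L{}ojasiewicz inequality.

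The main obstacle is obtaining the exact constant $\nu=\mu/(1+\lambda\mu)$ in (b), which requires the weighted splitting step rather than the contraction bound (a) alone.
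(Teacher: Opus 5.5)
Your proof is correct. Parts (a) and (c) essentially coincide with the paper's. In (c) the paper applies Cauchy--Schwarz and maximizes $\norm{g(x)}\,t-\tfrac{\nu}{2}t^2$ over $t\ge0$, which is the same computation as your minimization over $y$.

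The genuine difference is in (b), and there the reason you give for departing from the direct route is mistaken: the contraction bound (a) does yield the exact constant $\nu$. The paper simply writes $\Delta x^\top\Delta p\le\norm{\Delta x}\,\norm{\Delta p}\le\frac{1}{1+\lambda\mu}\norm{\Delta x}^2$, so that
\[
\bigl(g(x_1)-g(x_2)\bigr)^\top\Delta x=\tfrac1\lambda\bigl(\norm{\Delta x}^2-\Delta x^\top\Delta p\bigr)\ge\tfrac1\lambda\Bigl(1-\tfrac{1}{1+\lambda\mu}\Bigr)\norm{\Delta x}^2=\nu\norm{\Delta x}^2 .
\]
That is one line, with no weighted splitting.

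Your detour is nonetheless valid. The inequality $w^\top\Delta p\ge\lambda\mu\norm{\Delta p}^2$ holds, and so does the weighted inequality with $t=1/(\lambda\mu)$. Your argument also proves the sharper intermediate estimate $w^\top\Delta x\ge\norm{w}^2+\lambda\mu\norm{\Delta p}^2$. This packages the co-coercivity inequality \eqref{g_corcive} and $\nu$-strong monotonicity of $g$ into a single statement.

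That extra information is never used, either in this lemma or in Theorem \ref{thm:linear}. So the paper's route is the more economical one, and the ``main obstacle'' you identify is not actually there.
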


\begin{proof}
(a) Let $x_1,x_2\in\R^n$ and write $z_1=p(x_1)$ and $z_2=p(x_2)$. By optimality of $z_i$ in \eqref{exact_p}, $-\tfrac{1}{\lambda}(z_i-x_i) \in \partial f(z_i)$. Since $f$ is $\mu$-strongly convex, $\partial f$ is $\mu$-strongly monotone, so
\[
\Big(-\tfrac{1}{\lambda}(z_1-x_1)+\tfrac{1}{\lambda}(z_2-x_2)\Big)^\top(z_1-z_2) \ge \mu\norm{z_1-z_2}^2.
\]
Rearranging the left-hand side,
\[
\frac{1}{\lambda}(x_1-x_2)^\top(z_1-z_2) - \frac{1}{\lambda}\norm{z_1-z_2}^2 \ge \mu\norm{z_1-z_2}^2,
\]
so $\tfrac{1}{\lambda}(x_1-x_2)^\top(z_1-z_2) \ge (\mu+\tfrac1\lambda)\norm{z_1-z_2}^2$. By Cauchy-Schwarz inequality, the left side is at most $\tfrac1\lambda\norm{x_1-x_2}\norm{z_1-z_2}$, so 

$$\norm{x_1-x_2} \ge (1+\lambda\mu)\norm{z_1-z_2}~~\forall~x_1,x_2\in \mathbb{R}^n,$$
which is (a).

(b) Since $g(x)=(x-p(x))/\lambda$ (see \eqref{grad_F}), for $x_1,x_2\in\R^n$,
\[
\big(g(x_1)-g(x_2)\big)^\top(x_1-x_2) = \frac{1}{\lambda}\norm{x_1-x_2}^2 - \frac{1}{\lambda}\big(p(x_1)-p(x_2)\big)^\top(x_1-x_2).
\]
By Cauchy-Schwarz inequality and part (a), we obtain
$$\big(p(x_1)-p(x_2)\big)^\top(x_1-x_2) \le \norm{p(x_1)-p(x_2)}\norm{x_1-x_2} \le \tfrac{1}{1+\lambda\mu}\norm{x_1-x_2}^2.$$
Thus, 
\[
\big(g(x_1)-g(x_2)\big)^\top(x_1-x_2) \ge \frac{1}{\lambda}\Big(1-\frac{1}{1+\lambda\mu}\Big)\norm{x_1-x_2}^2 = \frac{\mu}{1+\lambda\mu}\norm{x_1-x_2}^2 = \nu\norm{x_1-x_2}^2,
\]
i.e., $g=\nabla F$ is $\nu$-strongly monotone, equivalently $F$ is $\nu$-strongly convex \cite{nesterov2003}. Strong convexity with $\nu>0$ gives coercivity, hence existence of a minimizer $x^\star$, which is unique by strict convexity; the stated quadratic bound is the standard strong-convexity inequality evaluated with $\nabla F(x^\star)=0$.

(c) Fix $x\in\R^n$. By $\nu$-strong convexity of $F$ applied at the base point $x$ with target $x^\star$,
\[
F(x^\star) \ge F(x) + g(x)^\top(x^\star-x) + \frac{\nu}{2}\norm{x^\star-x}^2.
\]
Rearranging and applying Cauchy-Schwarz inequality, with $t=\norm{x-x^\star}$,
\[
F(x)-F^\star \le g(x)^\top(x-x^\star) - \frac{\nu}{2}t^2 \le \norm{g(x)}\,t - \frac{\nu}{2}t^2 \le \max_{t\ge0}\Big(\norm{g(x)}\,t-\frac{\nu}{2}t^2\Big) = \frac{\norm{g(x)}^2}{2\nu},
\]
where the maximum is attained at $t=\norm{g(x)}/\nu$. This is (c).
\end{proof}

We can now combine the uniform step-length bound, the sufficient-descent property, and the strong-convexity properties of Lemma \ref{lem:strongF} into the main result of this subsection: an explicit R-linear rate for both function values and iterates.

\begin{theorem}\label{thm:linear}
Suppose Assumptions \ref{ass:below}, \ref{ass:eps}, \ref{ass:unif}, and \ref{ass:strongconvex} hold, and suppose the algorithm parameters satisfy
\begin{equation}\label{rho_par}
\rho = 1 - \delta m\eta\nu \in (0,1), 
\end{equation}
with $m$ as in Lemma \ref{lem:unifstep} and $\nu$ as in Lemma \ref{lem:strongF} (this can always be arranged by fixing $\lambda,\sigma,\eta,\tau,\mu$ and then choosing $\delta>0$ small enough in Algorithm \ref{algo}'s initialization, since $m$ and $\nu$ do not depend on $\delta$). Suppose further that
\begin{equation}\label{eps_cond}
\varepsilon_k \le \varepsilon_0 q^k \qquad \text{for all } k\ge0, \text{ for some fixed } q\in(0,\rho). 
\end{equation}
Then $F$ has a unique minimizer $x^\star$, and the iterates of Algorithm \ref{algo} satisfy
\begin{equation}\label{R_rate}
F(x_k) - F^\star \;\le\; K\rho^k, \qquad \norm{x_k-x^\star} \;\le\; \sqrt{\frac{2K}{\nu}}\,\rho^{k/2}, \qquad \forall k\ge0, 
\end{equation}
where $K = \big(F(x_0)-F^\star\big) + \dfrac{C}{\rho-q}$ and $C = \big(1+\tfrac{2\delta m\eta}{\lambda}\big)\varepsilon_0$. In particular, $\{x_k\}$ converges to $x^\star$ R-linearly.
\end{theorem}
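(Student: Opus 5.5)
The plan is to derive a one-step contraction for the inexact surrogate gap $e_k := F^a(x_k,\varepsilon_k) - F^\star$ with an additive error of order $\varepsilon_k$, and then unroll it. Uniqueness of $x^\star$ comes directly from Lemma~\ref{lem:strongF}(b).

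\textbf{One-step recursion.} Start from \eqref{nonsmooth_WC1} and apply Lemma~\ref{lem:descent} together with $\theta_k - \tfrac14 \ge \eta$ and the uniform bound $\alpha_k \ge m$ of Lemma~\ref{lem:unifstep}. This gives
\[
F^a(x_{k+1},\varepsilon_{k+1}) \le F^a(x_k,\varepsilon_k) - \delta m \eta \norm{\tilde g_k}^2 .
\]

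\textbf{Passing to exact quantities.} Next, replace $\norm{\tilde g_k}^2$ by $\norm{g(x_k)}^2$. Young's inequality and \eqref{eps_g} give
\[
\norm{\tilde g_k}^2 \ge \tfrac12\norm{g(x_k)}^2 - \norm{\tilde g_k - g(x_k)}^2 \ge \tfrac12\norm{g(x_k)}^2 - \tfrac{2\varepsilon_k}{\lambda}.
\]
A sharper constant is needed to reach exactly $\rho = 1-\delta m\eta\nu$. I would expand $\norm{\tilde g_k}^2 \ge \norm{g}^2 - 2\norm{g}\,\norm{\tilde g - g}$ and absorb the cross term, or simply accept a $\tfrac12$ factor. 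To match the stated constant $C = (1+\tfrac{2\delta m\eta}{\lambda})\varepsilon_0$, I expect the intended route is $\norm{\tilde g_k}^2 \ge \norm{g(x_k)}^2 - \tfrac{2\varepsilon_k}{\lambda}$, which reproduces the term $\tfrac{2\delta m\eta}{\lambda}\varepsilon_k$.

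Then apply Lemma~\ref{lem:strongF}(c), $\norm{g(x_k)}^2 \ge 2\nu(F(x_k)-F^\star)$, and use \eqref{eps_F} in the form $F(x_k) \ge F^a(x_k,\varepsilon_k) - \varepsilon_k$. Together these give
\[
e_{k+1} \le (1 - 2\delta m\eta\nu)\, e_k + c\,\varepsilon_k \le \rho\, e_k + C q^k ,
\]
using \eqref{eps_cond}. Note that $1 - 2\delta m\eta\nu \le \rho$, so the factor-$\tfrac12$ loss from Young's inequality is exactly compensated and the stated $\rho$ survives.

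\textbf{Unrolling.} Since $q < \rho$, induction gives
\[
e_k \le \rho^k e_0 + C\sum_{j<k} \rho^{k-1-j} q^j \le \rho^k\Big(e_0 + \frac{C}{\rho-q}\Big),
\]
where the geometric sum is bounded by $\rho^{k-1}\cdot\frac{\rho}{\rho-q}$. The initial gap $e_0$ is bounded by $F(x_0) - F^\star + \varepsilon_0$, and the extra $\varepsilon_0$ can be absorbed into $C/(\rho-q)$ because $\rho - q < 1$. Finally, $F(x_k) - F^\star \le e_k$ by \eqref{eps_F}, which yields $F(x_k) - F^\star \le K\rho^k$. The iterate bound then follows from Lemma~\ref{lem:strongF}(b): $\norm{x_k - x^\star}^2 \le \tfrac{2}{\nu}(F(x_k)-F^\star)$.

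\textbf{Main obstacle.} The delicate step is the constant bookkeeping in the passage from $\tilde g_k$ to $g(x_k)$ and from $F^a$ to $F$. The goal is for the contraction factor to come out as exactly $\rho$ and the error coefficient as exactly $C$. If the constants do not close precisely, I would accept a slightly larger $K$. The nonnegativity of $e_k$ from \eqref{eps_F} should be checked so that the contraction inequality is not vacuous.
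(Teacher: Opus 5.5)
Your route is sound and differs from the paper's in the quantity you track. The paper works with the true gap $a_k=F(x_k)-F^\star$. It converts back to $F$ at every step via $F^a(x_k,\varepsilon_k)\le F(x_k)+\varepsilon_k$, which gives $a_{k+1}\le a_k+\varepsilon_k-\delta m\eta\norm{\tilde g_k}^2$. It then uses $\norm{\tilde g_k}^2\ge\tfrac12\norm{g(x_k)}^2-\tfrac{2\varepsilon_k}{\lambda}\ge\nu a_k-\tfrac{2\varepsilon_k}{\lambda}$ to reach $a_{k+1}\le\rho a_k+(1+\tfrac{2\delta m\eta}{\lambda})\varepsilon_k$. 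Because that recursion starts from $a_0=F(x_0)-F^\star$, the stated $K$ comes out with no initial correction.

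Your surrogate gap $e_k=F^a(x_k,\varepsilon_k)-F^\star$ instead telescopes directly through \eqref{nonsmooth_WC1}. Inexactness is then paid only inside the gradient-dominance step, where it is multiplied by $\delta m\eta\nu$. With the same Young step you get $e_{k+1}\le\rho e_k+c\,\varepsilon_k$ with $c=\delta m\eta(\nu+\tfrac{2}{\lambda})=(1-\rho)+\tfrac{2\delta m\eta}{\lambda}$. This coefficient is smaller than the paper's by exactly $\rho$. The price is the extra $\varepsilon_0$ in $e_0\le a_0+\varepsilon_0$. A side benefit is an R-linear bound on the computable quantity $F^a(x_k,\varepsilon_k)$ itself.

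Two of your bookkeeping remarks are wrong, though neither damages the route.

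\textbf{The claimed inequality and factor.} The inequality $\norm{\tilde g_k}^2\ge\norm{g(x_k)}^2-\tfrac{2\varepsilon_k}{\lambda}$ does not follow from \eqref{eps_g}. Take $r=\sqrt{2\varepsilon_k/\lambda}<\norm{g(x_k)}$ and $\tilde g_k=(1-r/\norm{g(x_k)})\,g(x_k)$. Then $\norm{\tilde g_k}^2=\norm{g(x_k)}^2-2r\norm{g(x_k)}+r^2<\norm{g(x_k)}^2-r^2$. So the factor $1-2\delta m\eta\nu$ is not available, and you do not need it. The $\tfrac12$ from Young's inequality cancels the $2$ in Lemma~\ref{lem:strongF}(c) and gives exactly $\rho$.

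\textbf{Absorbing the initial $\varepsilon_0$.} Once you have replaced $c\,\varepsilon_0$ by $C$, the extra $\varepsilon_0$ cannot be absorbed: $\varepsilon_0+\tfrac{C}{\rho-q}>\tfrac{C}{\rho-q}$, whether or not $\rho-q<1$. Your fallback of a larger $K$ would therefore not prove the theorem as stated. Keep $c$ instead:
\[
\varepsilon_0+\frac{c\,\varepsilon_0}{\rho-q}=\frac{(1-q+2\delta m\eta/\lambda)\,\varepsilon_0}{\rho-q}\le\frac{C}{\rho-q}.
\]
Hence $F(x_k)-F^\star\le e_k\le K\rho^k$ with exactly the stated $K$, and in fact slightly better.
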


\begin{proof}
Write $a_k = F(x_k)-F^\star$. Fix $k\ge0$ with $\tilde g_k\neq0$ (otherwise Algorithm \ref{algo} has already terminated, and $a_k=0$ from that point on). By \eqref{eps_F} applied at $x_{k+1}$ and then \eqref{nonsmooth_WC1},
\[
F(x_{k+1}) \le F^a(x_{k+1},\varepsilon_{k+1}) \le F^a(x_k,\varepsilon_k) + \delta\alpha_k\tilde g_k^\top d_k.
\]
Three further facts bound the right-hand side. By \eqref{nonsmooth_WC2} and $\theta_k\ge\tfrac14+\eta$, $\tilde g_k^\top d_k\le-\eta\norm{\tilde g_k}^2$. By Lemma \ref{lem:unifstep}, $\alpha_k\ge m$. By \eqref{nonsmooth_WC2} again, $\tilde F_k=F^a(x_k,\varepsilon_k)\le F(x_k)+\varepsilon_k$. Combining these three facts with the two displayed inequalities above,
\begin{equation}\label{R_1}
a_{k+1} \le a_k + \varepsilon_k - \delta m\eta\norm{\tilde g_k}^2. 
\end{equation}
We now bound $\norm{\tilde g_k}^2$ below in terms of $a_k$. Since $\norm{u}^2 \le 2\norm{u-v}^2+2\norm{v}^2$ for any $u,v$, taking $u=g(x_k)$, $v=\tilde g_k$ gives $\norm{\tilde g_k}^2 \ge \tfrac12\norm{g(x_k)}^2 - \norm{g(x_k)-\tilde g_k}^2$; by \eqref{eps_g}, $\norm{g(x_k)-\tilde g_k}^2 \le 2\varepsilon_k/\lambda$, and by Lemma \ref{lem:strongF}(c), $\norm{g(x_k)}^2 \ge 2\nu a_k$. Hence
\begin{equation}\label{R_2}
\norm{\tilde g_k}^2 \ge \nu a_k - \frac{2\varepsilon_k}{\lambda}.
\end{equation}
Substituting \eqref{R_2} into \eqref{R_1}, we obtain
\[
a_{k+1} \le a_k + \varepsilon_k - \delta m\eta\Big(\nu a_k - \frac{2\varepsilon_k}{\lambda}\Big) = (1-\delta m\eta\nu)\,a_k + \Big(1+\frac{2\delta m\eta}{\lambda}\Big)\varepsilon_k = \rho\, a_k + \Big(1+\frac{2\delta m\eta}{\lambda}\Big)\varepsilon_k.
\]
By \eqref{eps_cond}, $\varepsilon_k\le\varepsilon_0 q^k$, and $1+\tfrac{2\delta m\eta}{\lambda}>0$, so
\begin{equation}\label{ak+1}
a_{k+1} \le \rho\, a_k + C q^k, \qquad C=\Big(1+\frac{2\delta m\eta}{\lambda}\Big)\varepsilon_0. 
\end{equation}
Unrolling \eqref{ak+1} from $k=0$ (each step substitutes the previous bound and adds the next error term),
\[
a_k \;\le\; \rho^k a_0 \;+\; C\sum_{i=0}^{k-1}\rho^{k-1-i}q^i \;=\; \rho^k a_0 \;+\; C\rho^{k-1}\sum_{i=0}^{k-1}\Big(\frac{q}{\rho}\Big)^{i}.
\]
Since $0<q<\rho$, $\sum_{i=0}^{k-1}(q/\rho)^i < \sum_{i=0}^{\infty}(q/\rho)^i = \rho/(\rho-q)$, so $C\rho^{k-1}\sum_{i=0}^{k-1}(q/\rho)^i \le C\rho^k/(\rho-q)$. Hence
\[
a_k \le \rho^k\Big(a_0 + \frac{C}{\rho-q}\Big) = K\rho^k,
\]
which is the first bound in \eqref{R_rate} (the case $k=0$ holds trivially since $K\ge a_0$). For the second bound, Lemma \ref{lem:strongF}(b) gives $\tfrac{\nu}{2}\norm{x_k-x^\star}^2 \le F(x_k)-F^\star = a_k \le K\rho^k$, and solving for $\norm{x_k-x^\star}$ gives the stated bound.
\end{proof}

\begin{remark}
Condition \eqref{eps_cond} is stated as an upper bound rather than an equality precisely so that it can be satisfied simultaneously with Assumption \ref{ass:unif}: the adaptive choice
\[
\varepsilon_{k+1} \;=\; \min\big\{\varepsilon_0 q^{k+1},\ \gamma\norm{d_k}^2\big\}
\]
at Step 2 of Algorithm \ref{algo} satisfies $\varepsilon_{k+1}\le\varepsilon_0q^{k+1}$ and $\varepsilon_{k+1}\le\gamma\norm{d_k}^2$ simultaneously, by definition of the minimum, for every $k$ -- so both \eqref{Assum4} and \eqref{eps_cond} hold with no further argument required, and the schedule is computable online from quantities already available at Step 2.
\end{remark}

\end{document}